  \numberwithin{equation}{section}
  \newcommand{\N}{\mathbb{N}}         
  \newcommand{\T}{\mathbb{T}}		  
  \newcommand{\diam}{\text{diam}}       
  \newcommand{\inrad}{\text{inrad}}
  \newcommand{\e}{\varepsilon}
  \newtheorem{theorem}{Theorem}[section]
  \newtheorem{lemma}[theorem]{Lemma}
  \newtheorem{prop}[theorem]{Proposition}
  \newtheorem{cor}[theorem]{Corollary}
  \theoremstyle{remark}
  \newtheorem{rem}[theorem]{Remark}
\DeclareSymbolFont{bbold}{U}{bbold}{m}{n}
\DeclareSymbolFontAlphabet{\mathbbold}{bbold}
\begin{document}

\title[A note on the hitting probabilities]{A note on the hitting probabilities of random covering sets}

\author{Bing Li$^{1,2}$}

\author{Ville Suomala$^{2}$}
\address{$^1$Department of Mathematics, South China University of Technology,
Guangzhou, 510641, P.R. China}
\address{$^2$Department of Mathematical Sciences, P.O. Box 3000, 90014
University of Oulu, Finland}

\email{libing0826@gmail.com, ville.suomala@oulu.fi}

\begin{abstract}
Let $E=\limsup\limits_{n\to\infty}(g_n+\xi_n)$ be the random covering set on the torus $\mathbb{T}^d$, where $\{g_n\}$ is a sequence of ball-like sets and $\xi_n$ is a sequence of independent random variables uniformly distributed on $\T^d$. We prove that $E\cap F\neq\emptyset$ almost surely whenever $F\subset\mathbb{T}^d$ is an analytic set with Hausdorff dimension, $\dim_H(F)>d-\alpha$, where $\alpha$ is the almost sure Hausdorff dimension of $E$. Moreover, examples are given to show that the condition on $\dim_H(F)$ cannot be replaced by the packing dimension of $F$.
\end{abstract}

\maketitle

\noindent{\small{\bf Key Words}:\  Random covering sets, hitting probability, Hausdorff dimension.}

\noindent{\small{\bf AMS Subject Classification (2010)}:\ 60D05, 
28A78, 28A80.}

\section{Introduction}

Let $(g_n)$ be a sequence of subsets of the $d$-dimensional torus $\T^d$ and $(\xi_n)$ a sequence of independent and uniformly distributed random variables on $\T^d$. Let $(\Omega, \mathbb{P})$ be the corresponding probability space and consider the random translates $G_n=g_n+\xi_n$. We are interested in the random covering set
\[E=\limsup_{n\to\infty}G_n=\bigcap_{n=1}^\infty\bigcup_{k=n}^\infty G_k\,,\]
that is, the set of points in $\mathbb{T}^d$ covered infinitely often by $(G_n)$.
Applying the Borel-Cantelli lemma and Fubini's theorem, the Lebesgue measure of $E$, $\mathcal{L}(E)$, is almost surely zero or one according to the convergence or divergence of $\sum_{n=1}^\infty\mathcal{L}(g_n)$ (see \cite{Ka1}).

The random covering problem on the circle $\mathbb{T}:=\mathbb{T}^1$ ($d=1$), where $g_n$ are intervals on the circle with length $l_n$, has been extensively studied in the literature. When $\sum_{n=1}^\infty l_n<\infty$, that is, $\mathcal{L}(E)=0$, Durand \cite{Durand} (see also \cite{FanWu}) showed that the almost sure Hausdorff dimension of the covering set is
\begin{equation*}\label{Def:alpha1}
\dim_H(E)=\sup\{0\le s\le 1:
\sum_{n=1}^{\infty}l_n^s=\infty\}:=\alpha\,.
\end{equation*}
 Under the following extra condition (C),
\begin{itemize}
\item[(C)]\, There exists an increasing sequence of positive
integers $\{k_i\}$ such that
\begin{equation*}\label{Eq:k}
\lim_{i \to \infty} \frac{k_{i+1}} {k_i} = 1\ \ \text{and}\ \
\lim_{i \to\infty}\frac{\log_2 n_{k_i}}{k_i}= \alpha<1,
\end{equation*}
where $
n_k=\#\{n\in\mathbb{N} :l_n\in [2^{-k+1},2^{-k+2})\}
\qquad (k\geq 2),
$
\end{itemize}
 Li, Shieh and Xiao \cite{LSX} (see also \cite{KPX}) proved that the probability of $E$ hitting a deterministic analytic set $F\subset\mathbb{T}$,
 \begin {equation}\label{hitting}
\mathbb{P}\big(E\cap F\neq\emptyset\big)=
\begin {cases} 0  &\text{if}\ \  \dim_P(F)<1-\alpha,\\
1 &\text{if}\ \ \dim_P(F)>1-\alpha,
\end {cases}
\end {equation}
where $\dim_P(F)$ is the packing dimension of $F$. 
Moreover, they obtained estimaties on the Hausdorff dimension of the intersection $E\cap F$,
\begin{equation}\label{dimHestimation}
\dim_H (F)+\alpha-1\leq \dim_H(E\cap F)\leq \dim_P(F)+\alpha-1\ \
\ a.s.
\end{equation}
In fact, the probability zero part of \eqref{hitting} and the first inequality  of \eqref{dimHestimation} remain valid even without the extra condition (C) as it is not used in the corresponding proofs in \cite{LSX}.
We mention that the proofs of \eqref{hitting} and \eqref{dimHestimation} in \cite{LSX} can be easily adapted to the higher dimensional torus $\mathbb{T}^d$ when $g_n$ are balls in $\mathbb{T}^d$. It was left open whether the  probability one part of \eqref{hitting} holds without the assumption (C) and the main purpose of this note is to settle this question.  

Now we return to the $d$-dimensional case. Let $l_n=\diam(g_n)$. For simplicity, we assume that all the $g_n$ are balls; $g_n=B(0,l_n/2)$. All results of this paper (with trivial modifications in the proofs) hold for sets $g_n$ which are ball-like in the sense that
\[\lim_{n\to\infty}\frac{\log\inrad(g_n)}{\log l_n}=1\, ,\]
where $\inrad(g_n)$ denotes the maximal radius of the balls inside $g_n$.
By reordering, we can assume that $(l_n)$ is decreasing.
It is well known (see \cite{FanWu},\cite{Durand},\cite{JJKLS},\cite{Per}) that the almost sure Hausdorff dimension of $E$ is given by the formulae
\begin{equation}\label{dimE}
\dim_H(E)=\alpha(l_n):=\limsup_{n\to\infty}\frac{\log n}{-\log l_n}=\sup\{0\le s\le d\,:\sum_{n=1}^\infty l_{n}^s=\infty\}.
\end{equation}

Our main result is the following theorem concerning the probability one part of \eqref{hitting}.  Here the extra condition (C) for $\{l_n\}$ is relaxed and the condition on $\dim_P(F)$ in \eqref{hitting} is replaced by $\dim_H(F)$.

\begin{theorem}\label{main_result}
If $F\subset\mathbb{T}^d$ is an analytic set with $\dim_H (F)>d-\alpha$, then
$E\cap F\neq\emptyset$ almost surely.
\end{theorem}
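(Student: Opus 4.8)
The plan is to reduce the statement to a positive-measure statement by finding, almost surely, a random probability measure carried by $E \cap F$. Since $F$ is analytic with $\dim_H(F) > d-\alpha$, by Frostman's lemma we may fix a compact subset $F_0 \subset F$ and a Borel probability measure $\mu$ supported on $F_0$ with $\mu(B(x,r)) \le C r^s$ for all $x$ and $r$, where we choose $s$ with $d - \alpha < s \le \dim_H(F)$. The goal is then to show that, with positive probability, $E \cap F_0 \ne \emptyset$; a zero-one law (or a conditioning/ergodicity argument using the independence of the tails of $(\xi_n)$) will then upgrade this to probability one.

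\medskip

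To produce a point of $E$ in $F_0$ I would run a second-moment / energy argument on a well-chosen subsequence of the covering balls. Since $s > d-\alpha$, by the characterization \eqref{dimE} of $\alpha$ we have $\sum_n l_n^{\,d-s} = \infty$ (indeed $d-s < \alpha$). Partition $\N$ into consecutive blocks $I_1, I_2, \dots$ and, on block $I_j$, consider the random variable
\[
Z_j = \sum_{n \in I_j} \mathbbold{1}_{\{\xi_n \in B_n\}}\,,\qquad B_n = \{x : B(x,l_n/2) \cap F_0 \ne \emptyset\} \supset \text{a small neighbourhood of } F_0,
\]
or, better, the weighted count $Z_j = \sum_{n \in I_j} \mu(G_n)/\EE[\mu(G_n)]$, so that $\EE[Z_j]=1$. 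Here $\EE[\mu(G_n)] = \int \mu(B(\xi,l_n/2))\,d\xi \asymp l_n^{\,d}$ is of the right order, while the Frostman bound gives $\EE[\mu(G_n)] \ge$ a constant times (something controlled by $s$); the point is that on a suitable subsequence the expected number of "useful" hits in a block diverges. A Paley–Zygmund / second-moment estimate then gives $\PP(Z_j > 0) \ge c > 0$ uniformly in $j$, provided one can bound $\EE[Z_j^2]$; the cross terms $\EE[\mu(G_m)\mu(G_n)]$ for $m \ne n$ factor by independence into $\EE[\mu(G_m)]\EE[\mu(G_n)]$, and the diagonal terms are where the Frostman exponent $s$ enters, via $\EE[\mu(G_n)^2] = \int \mu(B(\xi,l_n/2))^2\,d\xi \le C l_n^{\,s} \EE[\mu(G_n)]$ or similar. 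This shows each block contributes a ball hitting $F_0$ with probability bounded below, independently across blocks, so infinitely many blocks do so almost surely.

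\medskip

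The above gives infinitely many balls $G_{n_k}$ (with $n_k \to \infty$) each meeting $F_0$, but meeting $F_0$ is not the same as containing a point of $E \cap F_0$. To close this gap I would instead build a nested Cantor-type structure: work along a rapidly increasing sequence of scales, and at stage $k$ keep a finite collection of balls $G_n$ (with $n$ in the $k$-th range) that both meet the piece of $F_0$ selected at stage $k-1$ and carry a definite amount of $\mu$-mass; the Frostman condition guarantees that inside each such ball $F_0$ still has positive $\mu$-measure, so the construction continues. The limit of the nested sequence is a point lying in $\overline{F_0} = F_0$ and in infinitely many $G_n$, hence in $E \cap F$. Equivalently, one can phrase this as constructing a (random) measure $\nu$ on $E \cap F_0$ as a weak limit of $\mu$ restricted and renormalized to the surviving balls, and check $\nu \ne 0$ with positive probability via the uniform lower bound from the second-moment step.

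\medskip

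The main obstacle I anticipate is exactly this last point — promoting "a covering ball meets $F_0$" to "$E$ meets $F_0$" — which requires the survival probabilities at successive stages not to decay too fast, i.e. a genuinely multi-scale second-moment bound rather than a one-block estimate, and careful bookkeeping of the independence between scale ranges. A secondary technical point is the passage from $\PP(E \cap F \ne \emptyset) > 0$ to $= 1$: since $E$ is a tail event of $(\xi_n)$ this should follow from Kolmogorov's $0$–$1$ law once one observes that $E \cap F \ne \emptyset$ can be checked using only the tail of the sequence (finitely many $G_n$ never contribute to $E$), so this part is routine.
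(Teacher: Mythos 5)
Your overall architecture --- pass to a good compact subset of $F$, run a second--moment estimate scale by scale, build a nested Cantor-type sequence of covering balls meeting $F$, and finish with Kolmogorov's zero--one law --- is exactly the architecture of the paper's proof. But the step you yourself flag as the ``main obstacle'' is not a secondary technicality: it is the heart of the theorem, and your proposal leaves it open. A uniform bound $\PP(Z_j>0)\ge c>0$ over independent blocks only yields that infinitely many $G_n$ meet $F_0$, which is strictly weaker than $E\cap F_0\neq\emptyset$ (the hitting balls need not nest, and their ``limit points'' need not lie in $F_0$). For the nested construction to survive, the \emph{conditional} success probability at stage $k$, given the piece selected at stage $k-1$, must be at least $1-\varepsilon_k$ with $\sum_k\varepsilon_k<\infty$; a constant $c$ per stage gives survival probability $\prod_k c=0$. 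There is also a second, quieter gap: a ball $G_n$ that merely \emph{meets} $F_0$ (your first definition of $Z_j$) may carry zero $\mu$-mass, in which case the construction cannot be continued inside it; you need the surviving piece to have positive measure (or positive dimension) at every stage, and this must be arranged in advance, not assumed.

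Both gaps are closable along your lines, and it is worth seeing how the paper does it, because the fixes are the actual content. For the first: since $d-s<\alpha$, the definition of $\alpha$ gives arbitrarily large $m$ for which the number of indices with $l_n\approx 2^{-m}$ is at least $2^{mr}$ with $r>d-s$; then for the weighted count $Z^P=\sum_n\mu(G_n\cap P)$ relative to the current piece $P$ one gets $\mathrm{Var}(Z^P)/(\EE Z^P)^2\lesssim 2^{-m(r-(d-s))}/\mu(P)\to 0$, so the per-stage success probability tends to $1$ as the next scale is pushed far enough out --- this is the analogue of the paper's Lemma \ref{lem:existence_random_subcubes} combined with Lemma \ref{lem:cubes_coincide}, where the failure probabilities $\e(s,t,n)$ and the Chebyshev error both vanish as $n\to\infty$, so that $n_{k+1}$ can be chosen (depending on $n_k$ and on the finitely many surviving cubes of level $n_k$) to make each stage fail with probability at most $2\varepsilon_{k+1}$. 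For the second: the paper's Lemma \ref{lem:uniform_dim_bound} replaces your Frostman measure by a closed $H\subset F$ with $\dim_H(Q\cap H)>s$ for \emph{every} dyadic cube $Q$ touching $H$, so that any cube that merely touches the surviving set automatically contains a piece thick enough to recurse on; in your language this corresponds to restricting and renormalizing $\mu$ at each stage and accepting that the Frostman constant degrades by $1/\mu(P)$, which is harmless only because the next scale is chosen after $\mu(P)$ is known. As written, your proposal is a correct outline with the decisive quantitative step missing.
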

Combining Theorem \ref{main_result} and the probability zero part of \eqref{hitting}, we have the following hitting probability result, which applies also in case the condition (C) fails.
\begin{cor}\label{resultwithoutC}
Let $F\subset \mathbb{T}^d$ be an analytic set. Then
 \begin {equation}\label{hittingwithoutcondition}
\mathbb{P}\big(E\cap F\neq\emptyset\big)=
\begin {cases} 0  &\text{if}\ \  \dim_P(F)<d-\alpha,\\
1 &\text{if}\ \ \dim_H(F)>d-\alpha.
\end {cases}
\end {equation}
Furthermore,
\begin{equation}\label{dimHestimationwithoutcondition}
\dim_H (F)+\alpha-d\leq \dim_H(E\cap F)\leq \dim_P(F)+\alpha-d\ \
\ a.s.
\end{equation}
\end{cor}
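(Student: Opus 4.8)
\noindent\textbf{Proof plan for Corollary~\ref{resultwithoutC}.}
The plan is to assemble the corollary from three pieces. The second ("probability one") case of \eqref{hittingwithoutcondition} is exactly Theorem~\ref{main_result}, so it is simply invoked. The probability-zero case and both inequalities of \eqref{dimHestimationwithoutcondition} are the $\mathbb{T}^d$-analogues of results of \cite{LSX}; the plan for these is to re-run the standard first-moment and mass-distribution arguments and to observe that they use neither condition (C) nor the restriction $d=1$. So, apart from quoting Theorem~\ref{main_result}, the work is bookkeeping.

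For the probability-zero case and the upper bound $\dim_H(E\cap F)\le\dim_P F+\alpha-d$, the plan is a first-moment estimate. Using the definition of packing dimension, fix $\e>0$ and write $F=\bigcup_i F_i$ with $\beta_i:=\overline{\dim}_B F_i<\dim_P F+\e$; since $E\cap F=\bigcup_i(E\cap F_i)$ and $\dim_H$ is countably stable, it suffices to treat one $F_i$. Fix $\delta>0$. For all large $k$ one has $N(F_i,l_k)\le l_k^{-\beta_i-\delta}$, so $\PP(G_k\cap F_i\ne\emptyset)$, which is the Lebesgue measure of the $(l_k/2)$-neighbourhood of $F_i$, is at most $l_k^{\,d-\beta_i-\delta}$ for all $k\ge k_0=k_0(i,\delta)$ (the dimensional constants being absorbed into $\delta$). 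Hence, for every $t\ge 0$ and $m\ge k_0$,
\[
\EE\Big[\,\sum_{k\ge m}\mathbf{1}_{\{G_k\cap F_i\ne\emptyset\}}\,l_k^{\,t}\,\Big]\ \le\ \sum_{k\ge m}l_k^{\,d-\beta_i-\delta+t}\,,
\]
and by \eqref{dimE} the right-hand side is finite and tends to $0$ as $m\to\infty$ precisely when $t>\beta_i+\alpha-d+\delta$. When $\dim_P F<d-\alpha$, choosing $\e,\delta$ small and $t=0$ makes $\sum_k\PP(G_k\cap F_i\ne\emptyset)<\infty$, so by the Borel--Cantelli lemma almost surely only finitely many $G_k$ meet $F_i$ and $E\cap F_i=\limsup_k(G_k\cap F_i)=\emptyset$; summing over $i$ gives the probability-zero case. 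For the upper bound, given $t>\dim_P F+\alpha-d$ choose $\e,\delta$ so that $\beta_i+\alpha-d+\delta<t$; since $E\cap F_i\subseteq\bigcup_{k\ge m}(G_k\cap F_i)$ is a cover by sets of diameter $\le l_k\le l_m\to 0$, we have $\mathcal{H}^t_{l_m}(E\cap F_i)\le\sum_{k\ge m}\mathbf{1}_{\{G_k\cap F_i\ne\emptyset\}}l_k^{\,t}$, and the displayed bound shows this quantity is a.s.\ finite, non-increasing in $m$, and of expectation tending to $0$, hence $\to 0$ a.s.; thus $\mathcal{H}^t(E\cap F_i)=0$ a.s. Summing over $i$ and letting $t$ decrease to $\dim_P F+\alpha-d$ along a sequence yields $\dim_H(E\cap F)\le\dim_P F+\alpha-d$ a.s.

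For the lower bound $\dim_H(E\cap F)\ge\dim_H F+\alpha-d$ (we may assume $\dim_H F>d-\alpha$, otherwise it is vacuous) the plan is the mass-distribution argument of \cite{LSX}: fix $s\in(d-\alpha,\dim_H F)$, a compact $K\subseteq F$ carrying a probability measure $\mu$ with $\mu(B(x,r))\le r^s$, and note $\sum_n l_n^{d-s}=\infty$ by \eqref{dimE}; then construct a random measure on $E\cap K$ supported on a branching family of balls each contained in some $G_n$ --- enough such balls being available precisely because $\sum_n l_n^{d-s}=\infty$, via a second-moment estimate using the Frostman bound on $\mu$ --- which is non-zero a.s.\ and has finite $t$-energy a.s.\ for every $t<s+\alpha-d$, whence $\dim_H(E\cap K)\ge s+\alpha-d$ a.s.; letting $s\uparrow\dim_H F$ finishes this part. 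Combining the three pieces proves the corollary. I do not expect a genuine obstacle beyond Theorem~\ref{main_result} itself: the two first-moment arguments use only the box-counting bound for the pieces $F_i$ together with the series characterisation \eqref{dimE} of $\alpha$, and are manifestly insensitive to (C) and to $d$; the one point worth a line of verification is that the Cantor-type construction behind the lower bound --- whose driving mechanism is exactly the divergence of $\sum_n l_n^{d-s}$ --- never invokes the regularity of $(l_n)$, which is clear from the proof in \cite{LSX}.
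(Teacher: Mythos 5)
Your proposal is correct, and for most of the corollary it follows the paper's route: the probability-one case is Theorem~\ref{main_result}, and the probability-zero case together with the right-hand inequality of \eqref{dimHestimationwithoutcondition} are obtained by re-running the first-moment/covering arguments of \cite{LSX}, which the paper likewise handles by observing that those proofs use neither condition (C) nor $d=1$ (you simply write out the details that the paper delegates to a citation). The one genuine divergence is the left-hand inequality of \eqref{dimHestimationwithoutcondition}: you propose to redo the mass-distribution construction of \cite{LSX} --- a Frostman measure on a compact $K\subset F$, a second-moment estimate driven by $\sum_n l_n^{d-s}=\infty$, and an energy bound for a random measure on $E\cap K$ --- whereas the paper instead deduces the lower bound \emph{from Theorem~\ref{main_result} itself} via the general codimension lemma, Lemma 3.4 of \cite{KPX}, which converts the statement ``$E$ a.s.\ hits every analytic set of Hausdorff dimension $>d-\alpha$'' directly into $\dim_H(E\cap F)\ge\dim_H(F)-(d-\alpha)$ a.s. The paper's route is softer and shorter given that Theorem~\ref{main_result} is already in hand; your route is self-contained modulo \cite{LSX} and avoids importing \cite{KPX}, at the cost of having to verify (as the paper's introduction indeed asserts) that the energy construction in \cite{LSX} nowhere uses the regularity assumption (C) on $(l_n)$. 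Both are valid; only minor caveats apply to your write-up, e.g.\ the phrase ``precisely when $t>\beta_i+\alpha-d+\delta$'' should be an implication rather than an equivalence (at the critical exponent the series may go either way), but only the direction you use is needed.
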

We give examples indicating that in general, Theorem \ref{main_result} does not hold if $\dim_H (F)$ is replaced by $\dim_P (F)$, thus showing the necessity of the extra assumption (C) in \cite{LSX}.

\begin{prop}\label{prop:cex}
There are $(l_n)$ such that $\alpha(l_n)=d$ and a closed set $F\subset\T^d$ with $\dim_P (F)=d$ while $E\cap F=\emptyset$ almost surely.
\end{prop}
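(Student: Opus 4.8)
Here is how I would attack Proposition~\ref{prop:cex}.

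The plan is to decouple the construction of $F$ from that of the covering and to force $E\cap F=\emptyset$ almost surely already at the level of the elementary Borel--Cantelli lemma. Precisely, I will choose the radii $(l_n)$ with $\alpha(l_n)=d$ and build a compact set $F\subset\T^d$ with $\dim_P(F)=d$ which is so thin at the scales $l_n$ that, writing $F^{\rho}:=\{x\in\T^d:\dist(x,F)\le\rho\}$,
\[\sum_{n=1}^\infty\mathcal L\bigl(F^{\,l_n/2}\bigr)<\infty.\]
Since $G_n\cap F\neq\emptyset$ exactly when $\xi_n\in F^{\,l_n/2}$ and the $\xi_n$ are independent, this forces $G_n\cap F=\emptyset$ for all but finitely many $n$ almost surely, hence $E=\limsup_nG_n$ is disjoint from $F$ almost surely. (As $\mathcal L(F^{\,l_n/2})\ge c\,l_n^d$, necessarily $\sum_nl_n^d<\infty$, so this scheme automatically gives $\mathcal L(E)=0$ while $\dim_H(E)=\alpha(l_n)=d$ by \eqref{dimE}.) The whole difficulty is to reconcile $\dim_P(F)=d$ with the displayed summability: $\dim_P(F)=d$ forces $\overline{\dim}_B F=d$, so $N(F,r)\ge r^{-d+o(1)}$ along a sequence of scales, and those ``fat'' scales of $F$ must be kept multiplicatively far from the radii $l_n$, at which $F$ is to be very thin.

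For the radii, fix $\e_m\downarrow0$ with $0<\e_m<d$ and let $(l_n)$ consist of blocks: the $m$-th block is $N_m:=\lceil\mu_m^{-d+\e_m}\rceil$ balls of radius $\mu_m$, where $\mu_1>\mu_2>\cdots\to0$ is a super-exponentially decreasing sequence, fixed below. Then $N_m$ dominates $\sum_{i<m}N_i$, so at the end of block $m$ one has $n=\sum_{i\le m}N_i$ with $\log n=(d-\e_m)\log(1/\mu_m)+O(1)$, giving $\tfrac{\log n}{-\log l_n}\to d$; as $\tfrac{\log n}{-\log l_n}$ is maximised within each block at its right end, $\alpha(l_n)=\limsup_n\tfrac{\log n}{-\log l_n}=d$. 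For $F$ I use a homogeneous Moran set built by alternating two moves on the surviving cubes: a \emph{resting move} with a large integer $B_m$ replaces every cube of side $\lambda$ by a single sub-cube of side $\lambda/B_m$; a \emph{bursting move} with a (much larger) integer $C_m$ subdivides every cube of side $\lambda$ into all $C_m^{\,d}$ sub-cubes of side $\lambda/C_m$ and keeps them. Starting from $\T^d$, apply bursts and rests in the order $C_1,B_1,C_2,B_2,\dots$, write $\lambda_m$ for the cube-side right after the $m$-th burst, and put $\mu_m:=\lambda_m/B_m$, the side right before the $(m+1)$-st burst. Thus the radius $\mu_m$ used in block $m$ of $(l_n)$ lies deep inside the $m$-th resting stage of $F$, where $F$ is covered by $M_m:=\prod_{i\le m}C_i^{\,d}$ cubes of side $\mu_m$.

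Now $\lambda_m=\bigl(\prod_{i\le m}C_i\cdot\prod_{i<m}B_i\bigr)^{-1}$, so $M_m\lambda_m^d=\bigl(\prod_{i<m}B_i\bigr)^{-d}$ and $\tfrac{\log M_m}{-\log\lambda_m}=d-\delta_m$ with $\delta_m=\tfrac{d\sum_{i<m}\log B_i}{\sum_{i\le m}\log C_i+\sum_{i<m}\log B_i}$. Choosing each $C_{m+1}$ with $\log C_{m+1}\ge\bigl(\sum_{i\le m}\log B_i\bigr)^2$ makes $\delta_m\to0$, hence $\overline{\dim}_B F=d$; the same estimate applied to the tail of the construction (which again witnesses infinitely many bursts) gives $\overline{\dim}_B(F\cap Q)=d$ for every construction cube $Q$, and a Baire category argument --- any countable cover of $F$ has a member dense in some $F\cap Q$ --- then yields $\dim_P(F)=d$; also $\mathcal L(F)=\lim_mM_m\lambda_m^d=0$. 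For the thinness, since $F$ lies in $M_m$ cubes of side $\mu_m$, $M_m=\lambda_m^{-d+\delta_m}$, and $\mu_m=\lambda_m/B_m$,
\[N_m\,\mathcal L\bigl(F^{\,\mu_m/2}\bigr)\le 2^{d+1}\,\mu_m^{\e_m}\,M_m=2^{d+1}\,\lambda_m^{-d+\delta_m+\e_m}\,B_m^{-\e_m}\le 2^{d+1}\,\lambda_m^{\,d}\]
as soon as $\e_m\log B_m\ge 2d\log(1/\lambda_m)$, i.e. once $B_m$ is taken large enough. Since $\lambda_m\to0$ super-exponentially, $\sum_m\lambda_m^d<\infty$, so $\sum_n\mathcal L(F^{\,l_n/2})=\sum_mN_m\mathcal L(F^{\,\mu_m/2})<\infty$, and the first paragraph completes the proof.

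The only genuinely delicate point is the joint bookkeeping: $\dim_P(F)=d$ wants the burst ratios $C_m$ enormous relative to the earlier $B_i$, while the summability wants each $B_m$ enormous relative to $\log(1/\lambda_m)$ (hence relative to the earlier $C_i$). Because these constraints act on \emph{different} parameters, they are compatible, and the whole construction runs as a single recursion: choose $\e_m$; choose $B_m$ with $\log B_m\ge(2d/\e_m)\log(1/\lambda_m)$; set $\mu_m=\lambda_m/B_m$ and $N_m=\lceil\mu_m^{-d+\e_m}\rceil$; choose $C_{m+1}$ with $\log C_{m+1}\ge(\sum_{i\le m}\log B_i)^2$; iterate. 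After that, every claim reduces to the elementary estimates indicated above.
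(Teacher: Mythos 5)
Your proposal is correct and follows essentially the same strategy as the paper: a Moran-type set $F$ alternating ``fat'' scales (which force $\dim_P(F)=d$) with ``thin'' scales at which all the covering balls are placed, so that $\alpha(l_n)=d$ while a first-moment bound plus the Borel--Cantelli lemma shows that almost surely only finitely many $G_n$ meet $F$. The paper's construction (subdivide into $2^{m_k}$ pieces, then shrink each to length $2^{-n_k}\delta_{k-1}$) is exactly your burst/rest alternation, and its estimate $\mathbb{P}(E_k\cap F_k\neq\emptyset)\le 3N_k2^{n_ks_k}\delta_k$ is your bound on $\sum_n\mathcal{L}(F^{l_n/2})$ in block form.
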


\begin{prop}\label{prop:dimcex}
For all $0\le \alpha,t\le d$, there are a sequence $(l_n)$ with $\alpha=\alpha(l_n)$ and a closed set $F$ with $\dim_H F=t, \dim_P(F)=d$ such that almost surely, $\dim_H(E\cap F)=\min\{\alpha,t\}$. In particular, it is possible that
a.s. $\dim_H(F)+\alpha-d <\dim_H(E\cap F)<\dim_P(F)+\alpha-d$.
\end{prop}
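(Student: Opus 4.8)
The plan is to build the sequence $(l_n)$ and the closed set $F$ together, in terms of a single sufficiently rapidly increasing sequence of scales $2^{-k_1}>2^{-k_2}>\cdots$, so that $F$ is locally $d$-dimensional exactly at the scales $2^{-k_i}$ for $i$ in a prescribed infinite set $I\subseteq\N$ with infinite complement (the ``thick'' steps), locally $t$-dimensional at the remaining scales (the ``thin'' steps), and so that the random balls occur only at thick scales. First one disposes of the degenerate values: if $t=d$ take $F=\T^d$ and invoke \eqref{dimE}; if $\alpha=d$ then $\min\{\alpha,t\}=t$ and the conclusion is immediate from the lower bound in \eqref{dimHestimationwithoutcondition} together with $\dim_H(E\cap F)\le\dim_H F$; if $t=0$, resp.\ $\alpha=0$, then $\min\{\alpha,t\}=0$ and the conclusion follows from $\dim_H(E\cap F)\le\dim_H F$, resp.\ from $\dim_H(E\cap F)\le\dim_H E=0$ a.s.\ by \eqref{dimE}; in each of these cases a suitable $F$ is provided by the Moran construction below. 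So assume $0<\alpha<d$ and $0<t<d$.

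For the construction, let $F=\bigcap_{i\ge0}F_i$ with $F_0=\T^d$, where $F_i$ is the union of a family of (essentially dyadic) cubes of side $2^{-k_i}$ obtained by subdividing each cube of $F_{i-1}$: at a thick step $i\in I$ we keep \emph{all} $\asymp 2^{d(k_i-k_{i-1})}$ children, at a thin step $i\notin I$ only $\asymp 2^{t(k_i-k_{i-1})}$ of them. If $(k_i)$ grows fast enough, a routine mass-distribution estimate gives $\dim_H F=t$ (the $s$-Hausdorff content of $F$, for $s>t$, being killed by the natural cover at the scales $2^{-k_i}$, $i\notin I$), while $\overline{\dim}_B F=d$ along $i\in I$, so $\dim_P F=d$ by homogeneity. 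Simultaneously, at each thick step $i\in I$ we introduce $\asymp 2^{\alpha k_i}$ new balls $G_n$ of radius $\asymp 2^{-k_i}$ and place no balls at thin steps; then $(l_n)$ is decreasing, the cumulative number of balls of radius $\ge 2^{-k_i}$ is $\asymp 2^{\alpha k_i}$, and \eqref{dimE} yields $\alpha(l_n)=\alpha$.

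The bound $\dim_H(E\cap F)\le\min\{\alpha,t\}$ holds a.s.\ from $E\cap F\subseteq E$, $E\cap F\subseteq F$ and \eqref{dimE}. For the reverse bound we build inside $E\cap F$ a random homogeneous Cantor set $T$ of dimension $\min\{\alpha,t\}$. Run $T$ through the same scales: at a thin step let $T$ follow $F$; at a thick step $i\in I$, inside each surviving cube $Q$ (side $2^{-k_{i-1}}$) keep one child cube of side $2^{-k_i}$ lying in each generation-$i$ ball $G_n$ that is contained in $Q$. Such a child lies in $F$ (thick step) and in that ball, so every point of $T$ lies in infinitely many $G_n$ (one per thick step), whence $T\subseteq E\cap F$. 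Since $\EE\big(\#\{n\text{ of generation }i:G_n\subseteq Q\}\big)\asymp 2^{\alpha k_i}\cdot 2^{-dk_{i-1}}=:b_i$, with $b_i\to\infty$ as soon as $k_i\ge(2d/\alpha)k_{i-1}$, a Chernoff bound makes this count $\ge b_i/2$ off an event of probability $\le e^{-cb_i}$; rounding the ball centres to a sub-grid (and another such bound) yields $\gtrsim b_i$ distinct, well-separated children. Summing these failure probabilities over the $\le 2^{dk_{i-1}}$ cubes of level $i-1$ and over $i\in I$ --- a series with sum $<1$ once $(k_i)$ grows fast enough --- we get that with positive probability $T$ is a genuine homogeneous Moran set with level-$i$ branching $\gtrsim b_i$ (thick) or $\asymp 2^{t(k_i-k_{i-1})}$ (thin). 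As $\log_2 b_i\sim\alpha k_i$ and $(k_i)$ grows super-geometrically, the running dimension of $T$ tends to $\alpha$ along $i\in I$ and to $t$ along $i\notin I$, so the natural measure on $T$ satisfies $\mu(B(x,r))\lesssim r^s$ for every $s<\min\{\alpha,t\}$, giving $\dim_H T\ge\min\{\alpha,t\}$ on that event. Since $\{\dim_H(E\cap F)\ge s\}$ is a tail event of $(\xi_n)$ --- the $\limsup$ defining $E$ ignores finitely many $G_n$ --- Kolmogorov's $0$--$1$ law upgrades ``positive probability'' to ``almost surely'', and letting $s\uparrow\min\{\alpha,t\}$ gives $\dim_H(E\cap F)=\min\{\alpha,t\}$ a.s. Taking any $0<t<\alpha<d$ then yields $\dim_H F+\alpha-d=t+\alpha-d<t=\dim_H(E\cap F)<\alpha=\dim_P F+\alpha-d$ a.s., which is the last assertion.

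The main obstacle is the interaction, at scales strictly between two consecutive $2^{-k_i}$, of the random thinning (from $E$) and the deterministic one (defining $F$): one must check that the natural measure on $T$ does not concentrate there. This reduces to analysing $\mu(B(x,r))$ for $2^{-k_i}\le r<2^{-k_{i-1}}$ according to whether $B(x,r)$ meets $O(1)$, about $r^{d}b_i2^{dk_{i-1}}$, or all $b_i$ of the surviving level-$i$ cubes inside a level-$(i-1)$ cube; it is this estimate, rather than anything about the value of $\alpha(l_n)$ or $\dim_P F$, that dictates how rapidly $(k_i)$ must grow. The remaining ingredients --- the mass-distribution and covering estimates for $F$ itself, the Chernoff and union bounds, and the sub-grid rounding making $T$'s branching genuine --- are routine.
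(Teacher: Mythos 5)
Your proposal is correct in outline and reaches the stated conclusion, but the key probabilistic step is organised quite differently from the paper's. The common skeleton is the same: a homogeneous Moran construction of $F$ interleaved in scale with the radii $l_n$, so that $F$ is ``full'' at the scales where balls appear and $t$-dimensional in between; a random Cantor subset of $E\cap F$ whose running dimension oscillates between $\alpha$ and $t$; the Feng--Wen--Wu/mass-distribution estimate; and the Kolmogorov zero--one law to pass from positive probability to almost sure. Where you diverge is in how the branching of the random Cantor set inside each surviving cube of $F$ is secured. The paper slightly enlarges the covering intervals to radius $\eta_k^{\beta_k}$ with $\beta_k\uparrow\alpha$ and invokes Lemma \ref{lemma:Dvoreasy}, a Dvoretzky-type \emph{full covering} statement (this is the Beresnevich--Velani mass-transference idea \cite{BeresVelani}): on a single event of probability $1-\varepsilon_k$ \emph{every} construction interval of $F$ contains $\lfloor\delta_k/(3\eta_k^{\beta_k})\rfloor$ pairwise disjoint enlarged intervals, so the selected concentric $\eta_k$-intervals are automatically distinct and $\eta_k^{\beta_k}$-separated, and the intermediate-scale estimates for the natural measure follow at once from the homogeneous Moran set machinery of \cite{FWW97}; the only cost is the harmless loss from $\alpha$ to $\beta_k$. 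You instead count the generation-$i$ balls landing in each cube of $F$ directly, via Chernoff bounds and a union bound over cubes, and then need a further sub-grid/occupancy argument to make the selected children distinct, well separated, and roughly uniformly spread at \emph{all} intermediate scales $2^{-j}$, $k_{i-1}\le j\le k_i$ --- precisely the point you flag as ``the main obstacle''. That estimate does go through (even the exceptional subcubes where the count exceeds its mean by a factor of order $k_i$ only cost a factor absorbed by $s<\min\{\alpha,t\}$), so there is no gap, but it is exactly the bookkeeping the covering lemma is designed to bypass. Your separate treatment of the degenerate cases $t\in\{0,d\}$, $\alpha\in\{0,d\}$, and your derivation of the final displayed strict inequalities from $0<t<\alpha<d$, match what the paper's construction yields implicitly.
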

\begin{rem}
Proposition \ref{prop:dimcex} shows that both of the inequalities in \eqref{dimHestimationwithoutcondition} can be strict. Meanwhile, for any $\{g_n\}$ with $0<\alpha<d$, Proposition \ref{prop:dimcex} also gives an example of $F$ satisfying $\dim_H(F)<d-\alpha$, but $\dim_H(E\cap F)=\alpha>0$ a.s., in particular, $\mathbb{P}(E\cap F\neq\emptyset)=1$, which means that probability zero part of \eqref{hittingwithoutcondition} does not hold if $\dim_P(F)$ is replaced by $\dim_H(F)$ in Corollary \ref{resultwithoutC}. 

As indicated in \cite{LSX}, the hitting probabilities of the random covering sets are closely related to the hitting probabilities of certain limsup random fractals considered e.g. in \cite{KPX}. Although we don't make it explicit, it follows from the examples in Proposition \ref{prop:cex} and \ref{prop:dimcex} that an assumption analogous to (C), called \emph{the index assumption} (Condition 4) in \cite{KPX}, is essential for the validity of the results of \cite{KPX}.

Although the used methods are somewhat different, there is a close conceptual connection between the hitting probability estimates of random sets and the intersection estimates of $F$ and $f(G)$, where $F, G\subset\mathbb{R}^d$ are deterministic sets and $f$ is a 'typical' element of a suitable family of transformations $\mathbb{R}^d\rightarrow\mathbb{R}^d$. We refer to \cite[\S 13]{Mat} for an overview of such results.
\end{rem}

\section{Proofs}
For $N\in\N$, we use the notation $[N]=\{1,\ldots, N\}$.
Let $\mathcal{Q}_n$ denote the level $n$ dyadic grid of $\T^d$. For each $n$, we may label the elements of $\mathcal{Q}_n$ as $\{Q_1,\ldots,Q_{2^{nd}}\}$. We say that $Q\in\mathcal{Q}_n$ is uniformly distributed, if $Q=Q_X$, where $X$ is a random variable with $\mathbb{P}(X=i)=2^{-nd}$ for each $i\in[2^{nd}]$. We use  similar terminology as well when $\mathcal{Q}_n$ is replaced by some subfamily, e.g. all the elements of $\mathcal{Q}_n$ that lay inside a given cube $Q\in\mathcal{Q}_m$, $m\le n$. We denote such a family by $\mathcal{Q}_n(Q)$.

To avoid boundary effects, we assume throughout the proof of Theorem \ref{main_result} and the preceeding lemmata, that for each $n$, $\cup_{Q\in\mathcal{Q}_n}Q$ is a disjoint cover and we consider on $\mathbb{T}^d$ the topology induced by the dyadic cubes $Q\in\mathcal{Q}_n$, $n\in\N$. This is not a restriction of generality, since it is well known that e.g. the half-open dyadic cubes induce the standard Borel sigma algebra on $\mathbb{T}^d$, and hence the same analytic sets as the Euclidean topology.

Theorem \ref{main_result} is obtained as a consequence of several lemmata.

\begin{lemma}\label{lem:uniform_dim_bound}
If $F\subset\T^d$ is an analytic set and $\dim_H F>s$, then there is a nonempty closed subset $H\subset F$ such that $\dim_H(Q\cap H)>s$ for all dyadic cubes $Q$ for which $Q\cap H\neq\emptyset$.
\end{lemma}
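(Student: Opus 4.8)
The plan is to realize $H$ as the attractor of a careful pruning procedure applied to $F$. The obstruction to overcome is that an analytic set with $\dim_H F > s$ may have many dyadic subcubes $Q$ with $\dim_H(Q \cap F) \le s$ (or even empty intersection), so we must delete these ``bad'' pieces and verify that something of dimension exceeding $s$ survives, and that the survivor is closed.

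First I would fix $s' $ with $s < s' < \dim_H F$ and pass to a compact subset: by the standard capacitability/inner regularity of Hausdorff measure on analytic sets (Davies' theorem), there is a compact $K \subset F$ with $\dim_H K > s'$; we may even take $\mathcal{H}^{s'}(K) > 0$, and in fact by a further restriction assume $0 < \mathcal{H}^{s'}(K) < \infty$ so that $K$ carries a finite measure $\mu = \mathcal{H}^{s'}|_K$ with $\mu(B(x,r)) \le C r^{s'}$ for all $x,r$ (Frostman). Now call a dyadic cube $Q$ \emph{light} if $\mathcal{H}^{s'}(Q \cap K) = 0$, and let $K'$ be $K$ with the union of all light cubes removed. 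Since there are only countably many dyadic cubes, $\mathcal{H}^{s'}(K \setminus K') = 0$, so $\mathcal{H}^{s'}(K') = \mathcal{H}^{s'}(K) > 0$; in particular $K' \ne \emptyset$ and $\dim_H K' \ge s' > s$. Moreover, by construction, for \emph{every} dyadic cube $Q$ with $Q \cap K' \ne \emptyset$ we have $\mathcal{H}^{s'}(Q \cap K') = \mathcal{H}^{s'}(Q \cap K) > 0$ (the cube is not light, and removing null sets changes nothing), hence $\dim_H(Q \cap K') \ge s' > s$.

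It remains to make the set closed while preserving the dimension property. The issue is that $K'$ need not be closed: its closure could pick up points in previously-deleted light cubes. To handle this, I would iterate the deletion. Set $K_0 = K$, and having defined $K_j$, let $K_{j+1}$ be the closure of $K_j$ minus all light cubes (light still meaning $\mathcal{H}^{s'}(Q \cap K_j) = 0$), and put $H = \bigcap_j \overline{K_j}$. At each stage we only remove a set of $\mathcal{H}^{s'}$-measure zero and then take closure, so one checks inductively that $\mathcal{H}^{s'}(\overline{K_j}) \ge \mathcal{H}^{s'}(K) > 0$ and that the countable union of all cubes ever deleted is $\mathcal{H}^{s'}$-null; thus $\mathcal{H}^{s'}(H) = \mathcal{H}^{s'}(K) > 0$ and $H \ne \emptyset$. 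Since $H$ is an intersection of closed sets it is closed, and $H \subset F$. For the local property: if $Q$ is a dyadic cube with $Q \cap H \ne \emptyset$, then $Q$ was never deleted at any stage, so $Q$ is not light at any stage, and a short argument (the deleted part within $Q$ is $\mathcal H^{s'}$-null, uniformly over the iteration) gives $\mathcal{H}^{s'}(Q \cap H) = \mathcal{H}^{s'}(Q \cap K) > 0$, whence $\dim_H(Q \cap H) \ge s' > s$.

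The main obstacle, and the step I would spend the most care on, is exactly this closure-stability argument: ensuring that after infinitely many rounds of (delete null cubes, take closure) the surviving set $H$ is genuinely closed \emph{and} still meets every dyadic cube in positive $\mathcal{H}^{s'}$-measure — equivalently, that the closure operation does not resurrect a cube we intended to kill. This is why I work with a fixed Frostman measure $\mu$ of finite total mass: the bound $\mu\big(\bigcup_{\text{deleted }Q} Q\big) = 0$ is stable under countable unions and controls all stages simultaneously, and $\mu(H) = \mu(K) > 0$ forces both nonemptiness and the required lower dimension bound in every dyadic cube meeting $H$. (A cleaner alternative, if one prefers to avoid the iteration: take $H$ to be the \emph{topological support} of $\mu$ restricted to $K'$; by Frostman's bound $\mu$ gives every dyadic cube meeting $\operatorname{spt}\mu$ positive measure, hence dimension $\ge s'$, and $\operatorname{spt}\mu$ is closed by definition — one then only needs $\operatorname{spt}(\mu|_{K'}) \subset F$, which holds since $\mu$ is concentrated on the compact set $K \subset F$.)
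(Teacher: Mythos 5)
Your overall strategy---pass to a compact $K\subset F$ with $0<\mathcal{H}^{s'}(K)<\infty$ via the Davies/Rogers subset theorem, prune the dyadic cubes whose trace on $K$ is small, and note that only an $\mathcal{H}^{s'}$-null set is removed---is essentially the paper's (the paper prunes by the condition $\dim_H(Q\cap K)\le s$ and uses countable stability of Hausdorff dimension rather than measure bookkeeping; that difference is cosmetic). The genuine gap sits exactly where you say you would spend the most care: making the pruned set closed. Your iteration does not achieve this, because a deleted light cube can be resurrected by the closure and the process can stall at a fixed point at which a light cube still meets $H$. Concretely, for $d=1$ with half-open dyadic intervals: let $C=\bigcup_{n\ge 2}C_n$ where each $C_n\subset[\tfrac12-2^{-n},\tfrac12-2^{-n-1})$ is a Cantor set with $\mathcal{H}^{s'}(C_n)=2^{-n}$, arranged so that every dyadic interval meeting $C$ meets it in positive $\mathcal{H}^{s'}$-measure, and let $K=\overline{C}=C\cup\{\tfrac12\}$. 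Every interval $[\tfrac12,\tfrac12+2^{-n})$ is light, so deleting all light cubes from $K$ yields $C$; but $\overline{C}=K$, hence $K_j=K$ for all $j$ and $H=K$. Then $Q=[\tfrac12,1)$ satisfies $Q\cap H=\{\tfrac12\}\ne\emptyset$ while $\dim_H(Q\cap H)=0$, so your key assertion ``if $Q\cap H\ne\emptyset$ then $Q$ was never deleted at any stage'' fails. The same example defeats the ``cleaner alternative'': $[\tfrac12,1)$ meets $\operatorname{spt}\mu$ yet $\mu([\tfrac12,1))=0$, because the definition of support only forces positive measure for \emph{open} sets meeting it, and half-open dyadic cubes are not open in the Euclidean topology.

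The paper avoids this entirely by a convention declared just before the lemmata: $\T^d$ is equipped with the topology generated by the half-open dyadic cubes, in which every dyadic cube is clopen. Then the union $U$ of the bad traces $Q\cap K$ is relatively open in $K$, so the one-step pruned set $H=K\setminus U$ is already closed, and no iteration, closure operation, or Frostman measure is needed; the nonemptiness follows since $\dim_H(U)\le s<t$ forces $\mathcal{H}^{t}(U)=0$. (This convention is harmless because the dyadic cubes generate the same Borel, hence analytic, sets as the Euclidean topology.) If you insist on the Euclidean topology you need some substitute for this device to handle points of $K$ on grid boundaries; as written, your argument does not close the gap it correctly identifies.
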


\begin{proof}
First, we may find a closed set $K\subset F$ with $\mathcal{H}^t(K)>0$ for some $t>s$ (see \cite[Corollary 2,p. 99]{Ro70}). Let $U=\{x\in K\,:\,\dim_H(Q\cap K)\le s\text{ for some dyadic cube }Q\ni x\}$. Then $U$ is relatively open in $K$ and whence $H=K\setminus U$ is closed. It is clear that $\dim_H(Q\cap H)>s$ whenever $Q$ is a dyadic cube touching $H$. Moreover, a simple covering argument implies that $\mathcal{H}^t(U)=0$, whence $\mathcal{H}^t(H)=\mathcal{H}^t(K)>0$, and in particular $K$ is nonempty.
\end{proof}

The following lemma is a direct consequence of the definition of the Hausdorff measure.

\begin{lemma}\label{lem:hdim_lowerbound}
If $Q\in\mathcal{Q}_{n_0}$ is a dyadic cube, $F\subset\T^d$ and $\dim_H (F\cap Q)>s$, then there is $n_0\le N\in\N$ such that for $n\ge N$, there are at least $2^{ns}$ subcubes of $Q$ in $\mathcal{Q}_n$ which touch $F$.
\end{lemma}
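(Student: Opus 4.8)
The plan is to unwind the definition of Hausdorff dimension and Hausdorff measure. Since $\dim_H(F\cap Q) > s$, we may fix some $s'$ with $s < s' < \dim_H(F\cap Q)$, so that $\mathcal{H}^{s'}(F\cap Q) = \infty$ (or at least positive; positivity is all we need). In particular, there is a constant $c > 0$ such that $\mathcal{H}^{s'}_\infty(F\cap Q) > c$, where $\mathcal{H}^{s'}_\infty$ is the Hausdorff content; equivalently, for every countable cover of $F\cap Q$ by sets $U_i$ we have $\sum_i (\diam U_i)^{s'} > c$.

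Next I would specialize this lower bound to covers by dyadic cubes. Fix $n \ge n_0$ and let $\mathcal{D}_n$ be the collection of cubes in $\mathcal{Q}_n$ contained in $Q$ that intersect $F$; write $M_n = \#\mathcal{D}_n$. The cubes in $\mathcal{D}_n$ cover $F\cap Q$, and each has diameter $\sqrt{d}\,2^{-n}$ (a fixed multiple of $2^{-n}$). Applying the content bound to this particular cover yields
\[
M_n \,(\sqrt{d}\,2^{-n})^{s'} \;\ge\; \mathcal{H}^{s'}_\infty(F\cap Q) \;>\; c,
\]
hence $M_n \ge c\, d^{-s'/2}\, 2^{ns'}$. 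Since $s' > s$, the factor $2^{n(s'-s)}$ grows without bound, so there is $N \ge n_0$ (depending on $c$, $d$, $s$, $s'$, hence ultimately on $Q$, $F$, $s$) such that $c\, d^{-s'/2}\, 2^{n(s'-s)} \ge 1$ for all $n \ge N$; for such $n$ we get $M_n \ge 2^{ns}$, which is exactly the claim.

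There is essentially no obstacle here — the only point requiring a little care is the passage from "$\dim_H > s$" to a uniform positive content bound and the choice of the right intermediate exponent $s'$, together with tracking that the diameter-versus-sidelength constant $\sqrt d$ is harmless once absorbed into the threshold $N$. Everything else is the definition of Hausdorff content applied to dyadic covers. (One could equally phrase it via Hausdorff measure directly, noting that any cover by sets of diameter $\le \sqrt d\, 2^{-n}$ gives the same bound, but restricting at once to dyadic cubes is cleanest and is what is needed downstream.)
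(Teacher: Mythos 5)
Your argument is correct and is exactly the standard computation the paper has in mind: the authors state the lemma without proof, calling it ``a direct consequence of the definition of the Hausdorff measure,'' and your passage from $\dim_H(F\cap Q)>s$ to a positive $s'$-content bound, applied to the cover by level-$n$ dyadic subcubes of $Q$ meeting $F$, is precisely that consequence. No gaps.
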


\begin{lemma}\label{lem:existence_random_subcubes}
Suppose that $\alpha=\alpha(l_n)>t$ and let $Q\in\mathcal{Q}_{n_0}$ be given. For $n\ge n_0$, and each $l_j$ with $2^{-n}\sqrt{d}\le l_j\le 2^{-n_0}\sqrt{d}$, let $Q^j\in\mathcal{Q}_n$ be the dyadic cube containing $\xi_j$. Consider the random variable $N(Q,n)=\#\{j\,:\,Q^j\subset Q\}$. Then,
\begin{equation}\label{eq:manyhits}
\limsup_{n\rightarrow \infty}\mathbb{P}\left(N(Q,n)\ge 2^{nt}\right)=1\,.
\end{equation}
\end{lemma}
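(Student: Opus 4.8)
I want to estimate $N(Q,n)$, the number of indices $j$ with $2^{-n}\sqrt d\le l_j\le 2^{-n_0}\sqrt d$ whose random cube $Q^j\in\mathcal Q_n$ lands inside $Q$. Write $M(n)=\#\{j:2^{-n}\sqrt d\le l_j\le 2^{-n_0}\sqrt d\}$ for the number of available indices. Since $\xi_j$ is uniform on $\T^d$, we have $\mathbb P(Q^j\subset Q)=2^{-n_0 d}$ for each such $j$, independently in $j$, so $N(Q,n)$ is a sum of $M(n)$ i.i.d.\ Bernoulli$(2^{-n_0 d})$ random variables and $\mathbb E N(Q,n)=2^{-n_0 d}M(n)$. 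Since $\alpha(l_n)=\limsup_n \log n/(-\log l_n)>t$, there is a sequence $n_k\to\infty$ along which the number of $j$ with $l_j\ge 2^{-n_k}\sqrt d$ is at least $2^{n_k t'}$ for some fixed $t'\in(t,\alpha)$ (this is just unwinding the $\limsup$ in \eqref{dimE}); subtracting off the finitely-many-at-scale-$n_0$ correction, $M(n_k)\ge 2^{n_k t'}-C$ for large $k$, so $\mathbb E N(Q,n_k)\ge 2^{-n_0 d}(2^{n_k t'}-C)\to\infty$, and in particular $\mathbb E N(Q,n_k)\gg 2^{n_k t}$ since $t'>t$.

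Now I would apply a concentration inequality for the binomial. Since $N(Q,n_k)$ is a sum of independent Bernoulli variables with mean $\mu_k:=\mathbb E N(Q,n_k)\to\infty$, the Chernoff bound gives $\mathbb P(N(Q,n_k)\le \mu_k/2)\le e^{-c\mu_k}$ for an absolute constant $c>0$. For $k$ large enough we have $\mu_k/2\ge 2^{n_k t}$ (because $\mu_k\ge 2^{-n_0 d}(2^{n_k t'}-C)$ and $t'>t$), hence
\[
\mathbb P\bigl(N(Q,n_k)\ge 2^{n_k t}\bigr)\ \ge\ \mathbb P\bigl(N(Q,n_k)\ge \mu_k/2\bigr)\ \ge\ 1-e^{-c\mu_k}\ \xrightarrow[k\to\infty]{}\ 1.
\]
Therefore $\limsup_{n\to\infty}\mathbb P(N(Q,n)\ge 2^{nt})\ge \lim_k \mathbb P(N(Q,n_k)\ge 2^{n_k t})=1$, which is \eqref{eq:manyhits}.

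**Main obstacle.** The only real point requiring care is extracting, from the hypothesis $\alpha(l_n)>t$, a good lower bound on the count $M(n_k)$ along a suitable subsequence — one needs $M(n_k)$ to grow like $2^{n_k t'}$ with $t'$ strictly between $t$ and $\alpha$, so that after both the fixed multiplicative loss $2^{-n_0 d}$ and the Chernoff factor $1/2$ the expectation still dominates $2^{n_k t}$. This is purely a matter of translating the $\limsup$ definition in \eqref{dimE} into a statement about $\#\{j:l_j\ge 2^{-n}\sqrt d\}$ and checking that discarding the finitely many $j$ with $l_j>2^{-n_0}\sqrt d$ is negligible; the probabilistic half is a routine Chernoff estimate. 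Note $n_0$ (hence the factor $2^{-n_0 d}$) is fixed throughout, so it does not interfere with the $n\to\infty$ limit.
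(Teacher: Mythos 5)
Your proposal is correct and follows essentially the same route as the paper: both extract from $\alpha(l_n)>t$ a subsequence along which the number of admissible indices is at least $2^{n_k r}$ with $t<r<\alpha$, note that $N(Q,n_k)$ is then a sum of that many independent Bernoulli$(2^{-n_0d})$ variables with mean $\gg 2^{n_k t}$, and conclude by concentration. The only difference is cosmetic --- you invoke a Chernoff bound where the paper uses a second-moment computation and Chebyshev's inequality; either suffices since one only needs the deviation probability to tend to $0$ along the subsequence.
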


\begin{proof}
Pick $\alpha>r>t$. From the definition of $\alpha$, it readily follows that there are arbitrarily large $n$ such that the number of indices $j$ with
$2^{-n}\sqrt{d}\le l_j\le 2^{-n_0}\sqrt{d}$ is $L_n\ge 2^{nr}$. For each of these $j$,
 $Q^j$ is uniformly distributed among $\mathcal{Q}_n$, and clearly, $\{Q^j\}_{2^{-n}\sqrt{d}\le l_j\le 2^{-n_0}\sqrt{d}}$ are mutually independent random variables.
Write $X_j$ for the indicator function of $\{Q^j\subset Q\}$. Then $\mathbb{E}(X_j)=2^{-n_0d}$. Thus
\begin{align*}
\mathbb{E}(N(Q,n))&=\sum_j\mathbb{E}(X_j)=2^{-n_0d}L_n\,,\\
\mathbb{E}(N(Q,n)^2)&=\sum_j\mathbb{E}(X_j)+\sum_{j\neq i}\mathbb{E}(X_j X_i)=2^{-n_0d}L_n+(L_n^{2}-L_n)2^{-2n_0d}\,.
\end{align*}
Applying Chebyshev's inequality, $\mathbb{P}\left(N(Q,n)<\frac12\mathbb{E}(N(Q,n))\right)$ is bounded from above by
\begin{eqnarray*}
\mathbb{P}\left(|N(Q,n)-\mathbb{E}(N(Q,n))|\geq \frac12\mathbb{E}(N(Q,n))\right)\le 2^{n_0d+2}L_n^{-1}\le 2^{-nr+n_0d+2}\,.
\end{eqnarray*}
As $2^{nt}\le 2^{nr-n_0d-1}\le L_n 2^{-n_0d-1}=\frac12\mathbb{E}(N(Q,n))$ for arbitrarily large values of $n$, the claim follows.
\end{proof}

\begin{rem}\label{rem:uniformsequence}
It is clear from the above proof that the sequence realising the limsup in \eqref{eq:manyhits} can be chosen to be independent of the cube $Q$ as it only depends on the sequence $(l_n)_n$. More precisely, there is a sequence $n_k\to\infty$ such that for each dyadic cube $Q$,
\[
\lim_{k\rightarrow \infty}\mathbb{P}\left(N(Q,n_k)\ge 2^{n_kt}\right)=1\,.
\]
\end{rem}

\begin{lemma}\label{lem:cubes_coincide}
Let $Q\in\mathcal{Q}_{n_0}$ be a dyadic cube and let $n_0\le n\in\N$. Suppose that $Q_i$, $i\in[K]$, $K\ge 2^{ns}$ are (deterministic) cubes in $\mathcal{Q}_n(Q)$ and let $Q^{j}$, $j\in[L]$, $L\ge 2^{nt}$ be uniformly distributed independent random cubes in $\mathcal{Q}_n(Q)$. Then
\begin{equation}
\mathbb{P}\left(Q^{j}=Q_i\text{ for some }i\in[K], j\in[L]\right)\ge 1-\e(s,t,n)
\end{equation}
where $\e(s,t,n)\to 0$ as $n\to\infty$, provided $s+t>d$.
\end{lemma}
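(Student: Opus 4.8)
The plan is to reduce the problem to a balls-into-boxes estimate. Inside the cube $Q\in\mathcal{Q}_{n_0}$ there are exactly $M:=2^{(n-n_0)d}$ subcubes in $\mathcal{Q}_n(Q)$, each of them hit by each $Q^j$ with probability $1/M$, independently over $j\in[L]$. The event in question is that none of the $L$ random cubes lands in the prescribed set $\{Q_1,\ldots,Q_K\}$ of size $K$. Since the $Q^j$ are independent and each misses this set with probability $1-K/M$, the failure probability is exactly
\[
\e(s,t,n)=\PP\bigl(Q^j\notin\{Q_i:i\in[K]\}\text{ for all }j\in[L]\bigr)=\Bigl(1-\frac{K}{M}\Bigr)^{L}.
\]
Using $K\ge 2^{ns}$, $L\ge 2^{nt}$, $M=2^{(n-n_0)d}$ and the elementary inequality $(1-x)^{y}\le e^{-xy}$ for $x\in[0,1]$, $y\ge 0$, we get
\[
\e(s,t,n)\le \exp\Bigl(-\tfrac{K}{M}L\Bigr)\le \exp\bigl(-2^{ns+nt-(n-n_0)d}\bigr)=\exp\bigl(-2^{n_0 d}\,2^{n(s+t-d)}\bigr).
\]

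Now the hypothesis $s+t>d$ makes the exponent $n(s+t-d)\to+\infty$, so the right-hand side tends to $0$ as $n\to\infty$; this gives the required bound with $\e(s,t,n)=\exp(-2^{n_0d}2^{n(s+t-d)})$. One minor point to be careful about: strictly speaking $K$ could exceed $M$ only if $s$ is taken relative to the wrong scale, but since $K\le M$ automatically (there are only $M$ subcubes to choose the $Q_i$ from), the quantity $1-K/M$ lies in $[0,1)$ and the inequality $(1-x)^y\le e^{-xy}$ applies without fuss; if $K=M$ the probability is simply $0$.

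The argument is essentially immediate once the independence and the uniform distribution of the $Q^j$ are used, so there is no real obstacle; the only thing worth stating explicitly is that the $Q^j$ are assumed independent and uniformly distributed in $\mathcal{Q}_n(Q)$, which is exactly the hypothesis, and that this lemma will later be combined with Lemmas \ref{lem:hdim_lowerbound} and \ref{lem:existence_random_subcubes} (via Remark \ref{rem:uniformsequence}) to produce, along a common subsequence of scales, a random cube in $\mathcal{Q}_n(Q)$ that both touches $F$ and equals some $Q^j$, i.e. meets $G_j$.
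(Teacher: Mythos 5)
Your proof is correct and follows essentially the same route as the paper's: compute the probability that a single $Q^j$ misses all the $Q_i$ (namely $1-K2^{(n_0-n)d}$), use independence to raise it to the power $L\ge 2^{nt}$, and observe the bound tends to $0$ since $s+t>d$. Your explicit use of $(1-x)^y\le e^{-xy}$ is a harmless refinement of the paper's direct limit argument.
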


\begin{proof}
For each $Q^j$, we have
\[\mathbb{P}(Q^j\neq Q_i\text{ for all }i)= 1-K2^{(n_0-n)d}\le1-2^{n_0}2^{n(s-d)}\,.\]
Hence, by independence,
\begin{align*}
&\mathbb{P}\left(Q^j\neq Q_i\text{ for all }i\text{ and }j\right)\le(1-2^{n_0}2^{n(s-d)})^{L}\le\left(1-2^{n_0}2^{n(s-d)}\right)^{2^{nt}}
\end{align*}
and this upper bound tends to zero as $n\rightarrow+\infty$, since $s+t>d$.
\end{proof}

\begin{proof}[Proof of Theorem \ref{main_result}]
By Lemma \ref{lem:uniform_dim_bound}, we may assume that $F$ is closed and that $\dim_H (F\cap Q)>s>d-t>d-\alpha$ for some $s,t$ and for all dyadic cubes $Q$ intersecting $F$. Fix $0<\varepsilon_k<1$ such that $\sum_{k}\varepsilon_k<\infty$. 

The following notation is adapted from Lemma \ref{lem:existence_random_subcubes}. Given $m,n\in\N$, $n\ge m$ and $Q\in\mathcal{Q}_m$ we consider those $\xi_j\in Q$ for which $2^{-n}\sqrt{d}\le l_j\le 2^{-m}\sqrt{d}$, and let $Q^j\in\mathcal{Q}_n(Q)$ be the dyadic cube containing $\xi_j$. After re-enumeration, we denote by $\{Q^j\}_{j=1}^{N(Q,m)}$ the random family of all such cubes.

We define a sequence of integers $(n_k)_k$ in the following manner. To begin with, we choose $n_1$ so large that
\begin{enumerate}
\item There are at least $2^{n_1s}$ subcubes in $\mathcal{Q}_{n_1}$ intersecting $F$.
\item\label{2} 
$N(\mathbb{T}^d,n_1)\ge 2^{n_1 t}$.
\item The probability that at least one cube in  $\{Q^j\}_{j=1}^{N(\mathbb{T}^d,n_{1})}$ 
intersects $F$ is at least $1-\varepsilon_1$.
\end{enumerate}
We observe that such a choice is possible by Lemmata \ref{lem:hdim_lowerbound}--\ref{lem:cubes_coincide} (In fact, Lemma \ref{lem:existence_random_subcubes} is not even needed for the choice of $n_1$).

For $k\in\N$, we define $n_{k+1}$ inductively such that for each $Q\in\mathcal{Q}_{n_k}$ intersecting $F$, the following conditions hold:
\begin{enumerate}
\item\label{1} There are at least $2^{n_{k+1}s}$ cubes in $\mathcal{Q}_{n_{k+1}}(Q)$ intersecting $F$.
\item With probability at least $1-\varepsilon_{k+1}$, $N(Q,n_{k+1})\ge 2^{n_{k+1} t}$.
\item\label{3} 
Conditioned on $N(Q,n_{k+1})\ge 2^{n_{k+1} t}$, the probability that at least one cube in $\{Q^j\}_{j=1}^{N(Q,n_{k+1})}$ intersects $F$ is at least $1-\varepsilon_{k+1}$.
\end{enumerate}
Again, such choices are possible by Lemmata \ref{lem:hdim_lowerbound}--\ref{lem:cubes_coincide} since there are only finitely many such $Q\in\mathcal{Q}_{n_k}$ (For \eqref{2} also take Remark \ref{rem:uniformsequence} into account).

Let $\mathcal{A}_k$ denote the event that there are $Q^1,\ldots, Q^k$ satisfying for all $i\in[k]$ the conditions,
\begin{itemize}
\item $Q^i\in\mathcal{Q}_{n_i}$,
\item $Q^i\cap F\neq\emptyset$,
\item $Q^{i+1}\subset Q^i$,
\item  There is $\xi_j\in Q^i$ with $2^{-n_i}\sqrt{d}\le l_j\le 2^{-n_{i-1}}\sqrt{d}$ (and consequently $Q^i\subset G_{j}$).
\end{itemize}

These are decreasing events, and it follows from the above conditions \eqref{1}-\eqref{3} that
\[\mathbb{P}\left(\mathcal{A}_n\,|\,\mathcal{A}_{n-1}\right)\ge 1-2\varepsilon_{n}\,,\]
Since $\sum_{k}\varepsilon_k<\infty$, this yields $\mathbb{P}(\cap_n\mathcal{A}_n)>0$.
Clearly
\[\{F\cap E\neq\emptyset\}\supset\cap_n\mathcal{A}_n\,,\]
and consequently $\mathbb{P}(F\cap E\neq\emptyset)>0$. Finally, $\{F\cap E\neq\emptyset\}$ is obviously a tail event and the claim follows from the Kolmogorov zero-one law.
\end{proof}

\begin{proof}[Proof of Corollary \ref{resultwithoutC}]
The equalities in \eqref{hittingwithoutcondition} hold by Theorem \ref{main_result} and the probability zero part of \eqref{hitting}.

The right-hand inequality of \eqref{dimHestimationwithoutcondition} can be obtained by the same proof as the corresponding part of \eqref{dimHestimation} in \cite{LSX}.

The left-hand inequality of \eqref{dimHestimationwithoutcondition} follows from Theorem \ref{main_result} and Lemma 3.4 in \cite{KPX}.
\end{proof}

\begin{proof}[Proof of Proposition \ref{prop:cex}]
We present the construction for $d=1$. The generalisation for $d>1$ is straightforward.

Let $\varepsilon_k>0$ be such that $\sum_k\varepsilon_k<+\infty$ and let $0<s_k<1$ be increasing to 1 as $k\to\infty$. Also, let $m_k< n_k$ be two increasing sequences of integers to be determined later. We construct the set $F\subset[0,1]$ as follows. First, we divide $[0,1]$ into $2^{m_1}$ intervals of length $2^{-m_1}$ and inside each of these, we select an interval of length $2^{-n_1}$. Let $\mathcal{I}_1$ denote the collection of all these selected intervals (called the first level construction intervals).

We continue inductively. Assuming that $\mathcal{I}_k$ is a family of $N_k=\prod_{i=1}^k 2^{m_i}$ disjoint intervals of length $\delta_k=\prod_{i=1}^k2^{-n_i}$, we decompose each element of $\mathcal{I}_k$ into disjoint subintervals of length $2^{-m_{k+1}}\delta_k$ and inside each of these, select one interval of length $\delta_{k+1}:=2^{-n_{k+1}}\delta_k$. We denote these $N_{k+1}=2^{m_{k+1}}N_k$ intervals of length $\delta_{k+1}$ by $\mathcal{I}_{k+1}$.  
Let $F_k=\cup_{I\in\mathcal{I}_k} I$ and $F=\cap_{k}F_k$.

We choose each $m_k$ so large (depending on the choices of $m_i$, $n_i$ for $i<k$) that $2^{m_k} N_{k-1}(2^{-m_k}\delta_{k-1})^{s_k}\ge1$. This readily implies that $\dim_P(F)\ge \limsup\limits_{k\to\infty}s_k= 1$ (see \cite{FWW97}). Thus $\dim_P(F)=1$. 

To obtain suitable random covering sets, we set
\begin{equation*}
g_n=[0,\delta_k]\text{ for }2^{n_{k-1}s_{k-1}}\le n<2^{n_k s_k}\,.
\end{equation*}
and denote
\[E_k=\bigcup_{2^{n_{k-1}s_{k-1}}\le n<2^{n_k s_k}}[\xi_n-\frac{\delta_k}{2},\xi_n+\frac{\delta_k}{2}]\,,\]
where $\xi_n$ are independent and uniformly distributed on $\mathbb{T}$.
That is, $l_n=\delta_k$ for $2^{n_{k-1}s_{k-1}}\le n<2^{n_k s_k}$. It is clear that such $(l_n)$ does not satisfy the condition (C).
It follows that $\alpha(l_n)=1$ provided $n_k$ grows sufficiently fast. On the other hand, we have the estimate
\[\mathbb{P}\left(E_k\cap F_k\neq\emptyset\right)\le  3 N_k2^{n_k s_k}\delta_k=3 N_k  2^{n_k s_k}2^{-n_k}\delta_{k-1}\,,\]
and this can be made smaller than $\varepsilon_k$ by choosing $n_{k}$ large enough, depending on $s_k$ and the previous choices of $n_i$ and $m_j$ for $i<k$, $j\le k$.

The events $\{E_k\cap F_k\neq\emptyset\}$ are independent for different values of $k$ and thus the Borel-Cantelli lemma implies that almost surely, $E_k\cap F_k=\emptyset$ when $k$ is large. Since $E=\limsup\limits_{k\rightarrow\infty} E_k$ and $F\subset F_k$ for each $k$, this yields $E\cap F=\emptyset$ almost surely.
\end{proof}

For the proof of Proposition \ref{prop:dimcex}, we require the following elementary covering estimate.

\begin{lemma}\label{lemma:Dvoreasy}
Let $0<\beta<\alpha<1$ and $0<c\le C<+\infty$. Suppose $\xi_n$ are independent and uniformly distributed random variables on $\mathbb{T}$. Then
\[\mathbb{P}\left(\mathbb{T}=\bigcup_{C<n\le c\eta^{-\alpha}}[\xi_n-\frac{\eta^\beta}{2},\xi_n+\frac{\eta^\beta}{2}]\right)\longrightarrow 1\,,\]
as $\eta\downarrow 0$. 
\end{lemma}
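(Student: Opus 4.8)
The plan is to reduce the covering question to a second‑moment computation on a fixed finite grid and then apply the Paley--Zygmund inequality together with a monotonicity trick to upgrade ``with positive probability'' to ``with probability tending to one''. First I would introduce the number $M(\eta)=\#\{n:C<n\le c\eta^{-\alpha}\}\sim c\eta^{-\alpha}$ of available random arcs, each of length $\eta^\beta$; note that since $\alpha>\beta$, the total expected measure $M(\eta)\eta^\beta\to\infty$, so there is plenty of mass to cover $\mathbb{T}$. To quantify the covering, partition $\mathbb{T}$ into $K=K(\eta)\asymp\eta^{-\beta}$ consecutive arcs $J_1,\dots,J_K$ of length $\asymp\eta^\beta$, and for a point $x$ let $Z(x)=\#\{n:C<n\le c\eta^{-\alpha},\ x\in[\xi_n-\tfrac{\eta^\beta}{2},\xi_n+\tfrac{\eta^\beta}{2}]\}$ be the (random) number of arcs covering $x$. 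Then $\mathbb{T}$ is covered as soon as $\min_x Z(x)\ge 1$, and in fact it suffices that no gap of length $\ge\eta^\beta$ is left uncovered, for which a convenient sufficient event is that each grid arc $J_i$ contains at least one center $\xi_n$ with a neighbouring $\xi_{n'}$ close enough — but the cleanest route is simply to estimate $\mathbb{P}(Z(x)=0)$ directly.

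The key step is the direct estimate: for fixed $x$, the events $\{x\in[\xi_n-\tfrac{\eta^\beta}{2},\xi_n+\tfrac{\eta^\beta}{2}]\}$ are independent over $n$, each of probability $\eta^\beta$, so
\[
\mathbb{P}(Z(x)=0)=(1-\eta^\beta)^{M(\eta)}\le\exp\bigl(-M(\eta)\eta^\beta\bigr)\le\exp\bigl(-c'\eta^{\beta-\alpha}\bigr)
\]
for some $c'>0$ and all small $\eta$, since $\beta-\alpha<0$ makes the exponent $\to-\infty$. Now if $x$ is the left endpoint of the grid arc $J_i$ and $\widetilde Z(x)$ counts only those centers lying in the arc of length $\tfrac{\eta^\beta}{2}$ to the right of $x$, the same bound (with a different constant) gives that with probability $\ge 1-K(\eta)\exp(-c''\eta^{\beta-\alpha})$, every such half‑arc to the right of each grid point contains a center; a short deterministic argument then shows that this forces $\mathbb{T}=\bigcup_n[\xi_n-\tfrac{\eta^\beta}{2},\xi_n+\tfrac{\eta^\beta}{2}]$. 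Finally, since $K(\eta)\exp(-c''\eta^{\beta-\alpha})\asymp\eta^{-\beta}\exp(-c''\eta^{\beta-\alpha})\to 0$ as $\eta\downarrow0$ (the exponential beats the polynomial because $\beta-\alpha<0$), the probability of the covering event tends to $1$, as claimed.

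The only mild obstacle is the deterministic reduction from ``every right half‑arc of every grid point contains a center'' to ``$\mathbb{T}$ is covered'': one must check that if a center $\xi_n$ lies in the length‑$\tfrac{\eta^\beta}{2}$ interval immediately to the right of a grid point $p_i$, then the arc $[\xi_n-\tfrac{\eta^\beta}{2},\xi_n+\tfrac{\eta^\beta}{2}]$ reaches back past $p_i$, so consecutive such arcs overlap and chain around the circle; this is immediate from $\xi_n-\tfrac{\eta^\beta}{2}\le p_i$ and the fact that the grid spacing is $\tfrac{\eta^\beta}{2}$ — so choosing $K(\eta)=\lceil 2\eta^{-\beta}\rceil$ makes the chaining work. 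Everything else is the routine exponential‑versus‑polynomial comparison above.
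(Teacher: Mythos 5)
Your argument is correct and is essentially the paper's own proof: both partition $\mathbb{T}$ into $\asymp\eta^{-\beta}$ arcs of length $\eta^{\beta}/2$, observe that a grid arc is wholly covered by the $n$-th random arc exactly when $\xi_n$ falls in a set of measure $\eta^{\beta}/2$, and conclude by independence and a union bound that the failure probability is at most $\asymp\eta^{-\beta}(1-\eta^{\beta}/2)^{c\eta^{-\alpha}-C}\to 0$ since $\beta<\alpha$. The opening remarks about Paley--Zygmund and second moments are never used and can be deleted; the rest stands as written.
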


\begin{proof}
We may cover $\mathbb{T}$ with less than $3/\eta^\beta$ intervals $I$ of length $\eta^\beta/2$. For each of these $I$ and each $n\ge C$, we have
$\mathbb{P}\left(I\subset[\xi_n-\frac{\eta^\beta}{2},\xi_n+\frac{\eta^\beta}{2}]\right)= \eta^\beta/2$ and since $\{I\subset[\xi_n-\frac{\eta^\beta}{2},\xi_n+\frac{\eta^\beta}{2}]\}$ are independent events,
\[\mathbb{P}\left(I\not\subset\bigcup_{C\le n\le c \eta^{-\alpha}}[\xi_n-\frac{\eta^\beta}{2},\xi_n+\frac{\eta^\beta}{2}]\right)\le (1-\eta^\beta/2)^{c\eta^{-\alpha}-C}\,.\]
Summing over all $I$ yields
\begin{align*}
&\mathbb{P}\left(\mathbb{T}\neq\bigcup_{C\le n\le\eta^{-\alpha}}[\xi_n-\frac{\eta^\beta}{2},\xi_n+\frac{\eta^\beta}{2}]\right)
\le 3\eta^{-\beta}\left(1-\eta^\beta/2\right)^{c\eta^{-\alpha}-C}\longrightarrow 0\,,
\end{align*}
as $\eta\downarrow 0$.
\end{proof}

\begin{proof}[Proof of Proposition \ref{prop:dimcex}]
For simplicity, we again assume that $d=1$.

Let $n_1< m_1<n_2<m_2<n_3<\cdots$ be increasing sequences of integers (to be determined later). Denote by $\lfloor x\rfloor$ the integer part of $x$.  We construct $F$ by an inductive process as follows. We first decompose $[0,1]$ into $N_1=2^{\lfloor n_1 t\rfloor}$ intervals of length $2^{-\lfloor n_1 t\rfloor}$ and further choose one sub-interval of length $2^{-n_1}$ inside each. These form the family $\mathcal{I}_1$.

The construction is continued inductively. Given $\mathcal{I}_k$, a family of $N_k$ disjoint intervals of length $\delta_k=2^{-n_1-\ldots-n_k}$. We decompose each element of $\mathcal{I}_k$ into subintervals of length $2^{-\lfloor n_{k+1}t\rfloor}\delta_k$ and choose one subinterval of length $\delta_{k+1}=2^{-n_{k+1}}\delta_k$ inside each.
In total, there will be $N_{k+1}=\lfloor 2^{n_{k+1} t}\rfloor N_k$ such intervals with length $\delta_{k+1}$, and these form the family $\mathcal{I}_{k+1}$. We let $F_0=[0,1]$, $F_k=\cup_{I\in\mathcal{I}_k} I$ and finally $F=\cap_k F_k$. It is straightforward to check that $\dim_H F=t$ (see \cite{FWW97}).

To define the random covering sets, we denote $\eta_k=2^{-m_1-\ldots-m_k}$, let
\begin{equation*}
g_n=[0,\eta_k]\text{ for }2^{m_{k-1}\alpha}\le n<2^{m_k \alpha}\,,
\end{equation*}
and denote
\[E_k=\bigcup_{2^{m_{k-1}\alpha}\le n<2^{m_k \alpha}}[\xi_n-\frac{\eta_k}{2},\xi_n+\frac{\eta_k}{2}]\,,\]
where again $\xi_n$ are independent and uniformly distributed on $\mathbb{T}$.
Choosing $m_k$ large enough, we can check from \eqref{dimE} that the Hausdorff dimension of $E$ is $\alpha(l_n)=\alpha$ almost surely.

Obviously, $\dim_H(E\cap F)\le\min\{\dim_H F,\dim_H E\}$, so it remains to show that it is possible to choose the parameters $n_k, m_k$ such that also
\begin{equation}\label{eq:dim_lowerbound}
\dim_H(E\cap F)\ge\min\{t,\alpha\}
\end{equation}
holds almost surely. The reason why this should be true is that while the Hausdorff dimension of $F$ is realised on scales $\delta_k$, the Hausdorff dimension of $E$ is realised on scales $\eta_k$, $\delta_k\gg\eta_k\gg\delta_{k+1}$.
On scales $\delta_k$, $E$ is rather uniformly distributed (with high probability) and correspondingly, $F$ looks "one dimensional" on the scales $\eta_k$. So in order to find an efficient covering for $E\cap F$ one has to use intervals of size $\eta_k$ (roughly $\eta_k^{-\alpha}$ are needed) or $\delta_{k}$ (roughly $\delta_k^{-t}$ are needed), but since these scales are not comparable, one essentially does not gain anything by looking at the covering formed by intersecting the elements of these two 'natural' coverings. For deterministic sets with same kind of intersection behaviour, see e.g. \cite[Example 13.19]{Mat}.

Now to the detailed proof of \eqref{eq:dim_lowerbound}.
We would like to use the general mass transference principle of Beresnevich and Velani \cite[Theorem 3]{BeresVelani} since it is often very handy in this kind of situations. However, there is a monotonicity assumption for the ratio of the gauge function in their result, which cannot be verified in the situation at hand. Fortunately, our construction of the set $F$ and the random sets $E$ is regular enough, so that we can still use the main idea from their proof.

To that end, we construct a Cantor type set $G$ inside $E\cap F$ with the help of Lemma \ref{lemma:Dvoreasy}. Pick an increasing sequence $(\beta_k)$ with $\lim\limits_{k\to\infty}\beta_k=\alpha$ and let $\varepsilon_k>0$ such that $\sum_{k}\varepsilon_k<+\infty$. Then, by choosing each $m_k$ large enough compared to $m_{k-1}$, Lemma \ref{lemma:Dvoreasy} guarantees that with probability at least $1-\varepsilon_k$, we have 
\begin{equation}\label{full_cover}
\bigcup_{2^{m_{k-1}\alpha}\le n<2^{m_k \alpha}}[\xi_n-\frac{\eta^{\beta_k}_k}{2},\xi_n+\frac{\eta^{\beta_k}_k}{2}]=\mathbb{T}\,.
\end{equation}
Since the events \eqref{full_cover} are independent for disjoint values of $k$, the Borel-Cantelli lemma implies that with positive probability, \eqref{full_cover} holds true for all $k$ simultaneously.

From now on, we pick such $\omega$ that \eqref{full_cover} is valid for all $k\in\mathbb{N}$. For each $k$, we define families $\widetilde{\mathcal{I}}_k, \mathcal{G}_k$ such that $\widetilde{\mathcal{I}}_k\subset\mathcal{I}_k$ and $\bigcup_{J\in\mathcal{G}_k}J\subset E_k$.
We begin by setting $\widetilde{\mathcal{I}}_1=\mathcal{I}_1$ and  continue inductively as follows; Suppose $\widetilde{\mathcal{I}}_k$ has been defined with $L_k:=\#\widetilde{\mathcal{I}}_k$. Since \eqref{full_cover} holds, for each $I\in\widetilde{\mathcal{I}}_k$ we can choose a disjoint subfamily of $\{[\xi_n-\frac{\eta^{\beta_k}_k}{2},\xi_n+\frac{\eta^{\beta_k}_k}{2}]\subset I\}$ containing $\lfloor\delta_k/(3\eta^{\beta_k}_k)\rfloor$ intervals (We choose $m_k$ large enough to guarantee $\eta_k^{\beta_k}<\delta_k/12$). For each of these intervals, we choose the concentric interval $[\xi_n-\frac{\eta_k}{2},\xi_n+\frac{\eta_k}{2}]$ to the collection $\mathcal{G}_k$. Thus, in particular $\cup_{J\in\mathcal{G}_k}J\subset E_k$. As a result of the construction, there are $M_k:=L_k \lfloor\delta_k/(3\eta^{\beta_k}_k)\rfloor$ elements in $\mathcal{G}_k$. The family $\widetilde{\mathcal{I}}_{k+1}$ is obtained by selecting $\lfloor\eta_k 2^{-\lfloor n_{k+1} t\rfloor}\rfloor-2$ intervals in $\mathcal{I}_{k+1}$ inside each $J\in\mathcal{G}_k$. Then 
\[L_{k+1}=M_k \lfloor\eta_k 2^{-\lfloor n_{k+1} t\rfloor}\rfloor-2\,.\]
Let $G=\cap_{k=1}^\infty\cup_{I\in\tilde{I}_k}I=\cap_{k=1}^\infty\cup_{J\in\mathcal{G}_k}J.$. Thus $G\subset E\cap F$.

By choosing each $m_k$ large enough depending on $\delta_k$, and further $n_{k+1}$ large enough depending on $\eta_k$, we can make sure that
\begin{align}
\label{eka}\lim_{k\rightarrow\infty}\frac{\log M_k}{-\log\eta_k}&=\alpha\,,\\
\label{toka}\lim_{k\rightarrow\infty}\frac{\log L_k}{-\log\delta_k}&=t\,.
\end{align}
 Now it is straightforward to check that $\dim_H (G)=\min\{\alpha,t\}$. Indeed, defining a probability measure $\mu$ supported on $G$ such that $\mu(I)=L_{k}^{-1}$ for each $I\in \widetilde{\mathcal{I}}_k$ (and consequently also $\mu(J)=M_{k}^{-1}$ for all $J\in\mathcal{G}_k$), it follows using \eqref{eka}-\eqref{toka} and the fact that the subintervals of any $I\in\widetilde{\mathcal{I}}_k$ (resp. $J\in\mathcal{G}_k$) in $\mathcal{G}_k$ (resp. $\widetilde{\mathcal{I}}_{k+1}$) are essentially uniformly distributed, that 
 \begin{equation}\label{localdim}
 \liminf_{r\downarrow 0}\frac{\log(\mu(B(x,r)))}{\log r}=\min\{\alpha,t\}
 \end{equation} 
 for all $x\in G$. Whence $\dim_H(E\cap F)\ge\dim_H(G)\ge\min\{\alpha,t\}$. We omit the detailed proof of \eqref{localdim} since this kind of results are well known. See e.g.\cite[Lemma 2.2]{FWW97} and observe that our Cantor set $G
 $ is essentially a homogeneous Cantor set in the notation of \cite{FWW97}.

We have now shown that $\dim_H(E\cap F)\ge\min\{\alpha,t\}$ with positive probability. Finally, $\dim_H(E\cap F)\ge\min\{\alpha,t\}$ is a tail event, and so it follows from the Kolmogorov zero-one law that it has full probability.
\end{proof}


\begin{thebibliography}{BHS3}
\bibitem[BV06]{BeresVelani}
V. Beresnevich, and S. Velani, {\it A mass transference
principle and the Duffin-Schaeffer conjecture for Hausdorff
measures}, Ann. of Math. (2) 164 (2006), no. 3, 971--992.

\bibitem[Du10]{Durand}
A. Durand, {\it On randomly placed arcs on the circle}, in: {\it
Recent Developments in Fractals and Related Fields (Applied and
Numerical Harmonic Analysis)}, 343--352 (edited by J. Barral and
S. Seuret), Birkh\"{a}user, Boston, 2010.

\bibitem[FW04]{FanWu}
A. H. Fan and J. Wu, {\it On the covering by small random intervals},
Ann. Inst. H. Poincar\'{e} Probab. Statist. 40 (2004), 125--131.

\bibitem[FWW97]{FWW97} D.-J. Feng, Z.-Y. Wen and J. Wu, {\it Some dimensional results for homogeneous Moran sets}, Sci. China Ser. A 40 (1997), no. 5, 475-482.

\bibitem[JJKLS]{JJKLS} E. J\"{a}rvenp\"{a}\"{a}, M. J\"{a}rvenp\"{a}\"{a}, H. Koivusalo, B. Li and V. Suomala, {\it Hausdorff dimension of affine random covering sets in torus},  to appear in Ann. Inst. Henri Poincar\'{e} Probab. Stat..


\bibitem[Ka85]{Ka1} J.-P. Kahane, {\it Some random series of functions}, Second
edition. Cambridge Studies in Advanced Mathematics, 5. Cambridge
University Press, Cambridge, 1985.

\bibitem[KPX00]{KPX} D. Khoshnevisan, Y. Peres, and Y. Xiao,
{\it Limsup random fractals}, Electron. J. Probab. 5 (2000),
no. 5, 24 pp. (electronic).

\bibitem[LSX]{LSX} B. Li, N.-R. Shieh and Y. Xiao,
{\it Hitting probabilities of the random covering sets}, to appear in A.M.S. Contemp. Math. Series, http://www.math.ntu.edu.tw/~shiehnr/lshx2013Final.pdf.

\bibitem[Mat95]{Mat} P. Mattila, {\it Geometry of sets and measures in Euclidean spaces}, Cambridge University Press, Cambridge 1995 xii+343 pp.

\bibitem[Per]{Per} T. Persson, {\it A note on random coverings of Tori}, preprint, arXiv: 1307.1455V1.

\bibitem[Ro70]{Ro70} C. A. Rogers, {\it Hausdorff measures} Cambridge University Press, London-New York 1970 viii+179 pp.

\end{thebibliography}
\end{document}